\documentclass[amsart,11pt]{article}

\usepackage{amsfonts}
\usepackage{amsmath}
\usepackage{amssymb}
\usepackage{amsthm}
\usepackage{bbm}
\usepackage{graphicx}
\usepackage{gensymb}
\usepackage{comment}
\usepackage{placeins}
\usepackage{geometry}
\usepackage{mathtools}
\usepackage{hyperref}
\usepackage{color}
\usepackage[normalem]{ulem}
\geometry{hmargin=1.925cm,lmargin=1.925cm,headheight=110.0pt,top=1in,bottom=1.0in}

\newcommand{\R}{\mathbb R}

\newtheorem{theorem}{Theorem}

\newtheorem{lemma}{Lemma}
\newtheorem{proposition}{Proposition}
\newtheorem{definition}{Definition}


\author{Toby Sanders \\ \small{School of Mathematical and Statistical Sciences, Arizona State University, Tempe, AZ, USA.} }
\date{}
\title{Parameter Selection for HOTV Regularization}

\begin{document}
\maketitle

\begin{abstract}
Popular methods for finding regularized solutions to inverse problems include sparsity promoting $\ell_1$ regularization techniques, one in particular which is the well known total variation (TV) regularization.  More recently, several higher order (HO) methods similar to TV have been proposed, which we generally refer to as HOTV methods. In this letter, we investigate the problem of the often debated selection of $\lambda$, the parameter used to carefully balance the interplay between data fitting and regularization terms.  We theoretically for a scaling of the operators for a uniform parameter selection for all orders of HOTV regularization.  In particular, parameter selection for all orders of HOTV may be determined by scaling an initial parameter for TV, which the imaging community may be more familiar with.  We also provide several numerical results which justify our theoretical findings.
\end{abstract}

\subsubsection*{Keywords}{Image reconstruction, Regularization, Inverse problems, Higher order TV, Parameter selection}

\section{Introduction}
Inverse and imaging problems play a vital role in practically all areas of technology and science. These problems naturally turn up in applications\footnote{MRI, X-ray tomography, optical coherence tomography, and synthetic aperture radar, just to scratch the surface.}, whenever data measurements are collected that must be processed or \emph{inverted} to form resulting images that explain the measurements.
A popular formulation for inverse signal and imaging problems takes the form
\begin{equation}\label{general-reg}
 \min_f F(f) + H(f),
\end{equation}
where $f$ is the image we want to recover, $F$ is a data fitting term, and $H$ is a regularization term that encourages some prescribed smoothness on the image.

If $b$ is the known noisy data, and an estimated forward operator $A$ is known that maps $f$ to the data, then the data fitting term is commonly of the form
\begin{equation}\label{data-fit}
F(f) = \frac{\lambda}{2} \| A(f) - b \|_2^2.
\end{equation}
Typically, $A$ is a linear operator that can be approximated by a matrix, and $\lambda>0$ is a model parameter which is chosen to achieve the best results by properly balancing $F$ and $H$.  

The regularization term $H$ is usually a convex norm or semi-norm.  Historically $H$ has been selected as the $\ell_2$ energy norm of the image to improve ill-conditioning of some inverse problems, a famous method which goes by the name of Tikhonov regularization.  Our interests in this paper are more recently popular $\ell_1$ regularization methods which are often categorized as compressed sensing \cite{CSincoherence}.  These methods promote sparsity of $f$ in some selected domain by setting the regularization term to 
\begin{equation}\label{reg-norm}
H(f) = \| {Tf} \|_1 , 
\end{equation}
where $T$ is the mapping under which the solution is assumed to be sparse.  A very common regularization choice for imaging problems is to use the TV norm, which is equivalent to setting $T$ as the first order finite difference operator that maps $f$ to the difference between all adjacent pixels of $f$.  This type of regularization encourages sparsity in the boundaries or number of jumps in the image \cite{rudin1992nonlinear,strong2003edge}.  Also commonly used are wavelet frames and bases \cite{mallat2008wavelet,starck2010sparse}, and more recently higher order finite difference transforms, an approach sometimes referred to as higher order TV \cite{HOTV,Archibald2015}.   We now address the main focus of this paper, the famous and often debated problem of the selection of the parameter $\lambda$ in (\ref{data-fit}), particularly for higher order TV methods.

With our formulation, it is most generally accepted that the parameter $\lambda$ is chosen to be smaller for noisier data $b$, and chosen to be larger for more accurate data values.  With regards to TV regularization, numerous publications have already been devoted to parameter selection \cite{PS-1,Aujol2006,PS-2}, and many working in the imaging community may be familiar with this task.   Indeed, the motivation behind this work was to generalize the parameter selection based off of TV.  Having such a parameter selection may prove to be useful in many practical applications.  With general image reconstruction problems, it may not be known exactly which regularization method is appropriate prior to searching for a solution, i.e. precise characterization of the underlying image structure may not be known prior to reconstruction.  Therefore several regularization approaches may be carried out in conjunction to realize the most effective approach \cite{SGP-ET}.  Having an accurate parameter scaling between the approaches is necessary for an unbiased comparison of the various approaches and limits any additional parameter tuning.

The referenced approaches for TV parameter selection resolve to iterative selection of $\lambda$ by posterior estimates, or by a priori estimation of noise levels.  Another alternative for general regularization problems is the popular L-curve method \cite{Hansen-l-curve,Calvetti2000423}.  This method is essentially a brute force approach, in which resulting values for $F$ and $H$ in (\ref{general-reg}) are plotted together for an array of parameter selections in order to determine the parameter value that yields the optimal payoff between the $F$ and $H$.   This approach has only been formally studied for Tikhonov regularization, and more importantly it requires one compute many solutions to the problem.

In this paper, we work towards uniform parameter selection for the more general $\ell_1$ methods that we refer to as higher order total variation (HOTV).  A number of HOTV type of approaches have been proposed \cite{HOTV,TGV,papafitsoros2014combined,hu2012higher}.  While each of these methods have subtle differences, the motivation behind each is somewhat similar: to allow for more general function behavior and reduce staircasing effects observed with TV by removing penalties for low order polynomials.  This order is usually flexible within the approach, and the removal of such penalties may be achieved by defining the transform $T$ to in some way approximate higher order derivatives.  For instance, in \cite{TGV} the authors design a regularization term in the continuous setting based on higher order weak derivates and show that low order polynomials are not penalized, again depending on the order of the method.  In \cite{HOTV}, a fourth order PDE-based technique was used to a similar effect, also removing penalties for low order polynomials.  In \cite{papafitsoros2014combined} the authors used within the regularization terms both first order derivatives with second order derivates and demonstrated reduced staircasing effects.  The algorithmic design in each of these approaches are somewhat different depending on the date of publication, but at the current time the alternating direction method of multipliers approach has primarily been adopted \cite{Li2013,tosserams2006augmented,boyd2011alternating}.

We work in particular with the HOTV formulation that has been referred to as polynomial annihilation (PA) regularization \cite{Archibald2015,Archibald:2005:PFE:1061182.1068411}, which takes a straight forward approach to removing penalties from low order polynomials.  This is achieved by using the common discretized approximation of derivatives otherwise known as finite differences.  The MATLAB algorithm for this approach can be downloaded with a corresponding user's guide online \cite{toby-web}.  Compared with other HOTV approaches, which have primarily been demonstrated for denoising and deconvolution problems, our algorithm has been shown effective for a variety of 2D and 3D inverse problems.  In \cite{SGP-ET} it was used to reconstruct 3D nanoscale volumes with dimensions $2^{28}$ total voxels from undersampled tomographic data of total dimension approximately $2^{26}$.  In \cite{sanders2017composite} the approach was adapted for a joint sparsity model to reconstruct scenes from nonuniform Fourier related synthetic aperture radar data sets. 

The ability to effectively solve such large and complex imaging problems is facilitated by a robust algorithm with simple parameter selections.   This work details some important results that has enabled the ease of use of these algorithms, and our numerical studies here additionally confirm the use of these results.  We first argue analytically in two distinct approaches that precise scaling of $\lambda$ depends on the order of the HOTV method used, and the result essentially tells us that we should scale by powers of two\footnote{Although in this work we argue for a scaling of the parameter, the approach has been so successful that the scaling of the parameter has now been removed as an option in the algorithm, and the operators are scaled instead.  Therefore the user may simply choose parameter values without concern for the order of the method.}.  While a simple scaling by two may seem elementary, the details that arrive at this result are nontrivial, and the approach has been extremely effective \cite{SGP-ET}.  We provide robust numerical results that confirm our choice of the scaling of $\lambda$, by both imaging examples and a large series of simulations where we precisely determine the optimal parameters.  In section 2 we introduce our HOTV regularization method.  In section 3 we provide our analysis for appropriate parameter selection, and in section 4 we present several numerical experiments that confirm our proposed parameter selection.

\section{Polynomial Annihilation for $\ell_1$ Regularization}
The linear transform $T$ in (\ref{reg-norm}) has more recently gained attention as a higher order finite difference operator, and in particular one known as polynomial annihilation (PA) regularization \cite{Archibald2015,Archibald:2005:PFE:1061182.1068411,VOTV}.  Loosely speaking, the transform for PA annihilates $f$ at grid points where $f$ is essentially a polynomial of degree less than a desired value, which is called the order of the PA operator.  We denote the PA operator of order $k$ by $T_k$, while noting that $T_1$ is equivalent to TV. As with TV, the PA regularization encourages sparsity in the number jumps in the image.  However, in contrast with TV, characterization of the smooth regions is relaxed, where they are modeled as low order polynomials (e.g. degree 2 or 3) rather than constant functions.

Without formal details, we state that on an equally spaced grid the PA operator is equivalent to a kth order finite difference. Hence, the PA transform has the following definition:

\begin{definition}
Let $ f \in \R^N$ and let $j\le N - k$.  Then the $j^{th}$ element of the order $k$ PA transform of $f$ is given by
\begin{equation}\label{finite-diff}
 ({T}_k {f})_j = \sum_{m=0}^k (-1)^{k+m}  {k \choose m} f_{j+m} .
\end{equation}
\end{definition}

For indices $j>N-k$, we may also define $(T_k f)_j$ using a periodic extension of $f$.  

For higher order finite differences in multiple dimensions, we simply extend $T_k$ to evaluate the difference along each dimension.  For instance, in 2D, we may have $T_k = [ T_k^x , T_k^y]^T$, where $T_k^x$ computes the differences along the $x$ dimension and similarly for $y$.  For example, for $f\in R^{n\times n}$, $$(T_k^x f)_{j,\ell} = \sum_{m=0}^k (-1)^{k+m} {k\choose m} f_{j+m, \ell}.$$

To briefly see that ${T_k}$ defined in (\ref{finite-diff}) indeed annihilates polynomials of degree less than $k$, one can first consider a function defined on a continuous domain.  Here, the operator is evidently the $k^{th}$ derivative operator.  Then discretization of $f$ along with the mean value theorem leads one to find that standard finite differences yield the discretized version of the PA operator, which is well known to be (\ref{finite-diff}) (see for instance, \cite{FDbook}).

Finally, the PA regularization method  can be expressed as finding the solution to
\begin{equation}\label{gen-PA}
\min_f  \Big\{ \frac{\lambda}{2}\| Af - b \|_2^2 +  \| {T_k f} \|_1 \Big\} ,
\end{equation}
where $T_k$ is defined by (\ref{finite-diff}).

 In order to solve (\ref{gen-PA}) for large imaging problems, an efficient iterative algorithm is needed. In our work, we have followed the alternating direction method of multipliers (ADMM) approach in \cite{Li2013}, which rewrites the problem with an augmented Lagrangian function and uses alternating direction minimization to solve equivalent subproblems.  Our algorithm is available online \cite{toby-web}.

\section{Selection of $\lambda$}\label{lambda-select}
In this section we address the appropriate selection of the parameter $\lambda$ in (\ref{gen-PA}), showing explicitly how the parameter depends on the order $k$ of $T_k$.  In general, when solving a problem such as (\ref{gen-PA}), appropriate selection of $\lambda$ is greatly eased by rescaling the operators $A$ and $T_k$, in order to reduce the dependence on the specific problem and regularization transform.  In doing so, $\lambda$ is then primarily dependent on the noise levels and error in the forward model.  \footnote{We note that it is important to make the distinction between noise levels and model errors, since the latter is typically coherent and cannot be captured by standard noise models.  Furthermore, in some cases this means $\|Af -b\|_2$ can not be made arbitrarily small, and thus $\lambda$ should be chosen accordingly.}

To reduce problem dependence, $A$ is rescaled so that $\|A\|_2=1$, and the data values are rescaled accordingly.  Determining this rescaling factor can be accomplished relatively easily, since for $A\in \R^{m\times n}$, $\|A\|_2^2 = \rho(A^T A)$, the largest eigenvalue of $A^T A$ (see, for instance \cite{Golub}).  This eigenvalue can be computed iteratively in computational time that is insignificant to the time needed to solve (\ref{gen-PA}), making this rescaling a practical rule.

Similarly, we may rescale $T_k$ so that $\|T_k\|_1=1$.  However, for simplicity we keep the integer values defined in the operator given by (\ref{finite-diff}), and thus instead choose to rescale $\lambda$ depending on $\|T_k\|_1$.  To do this, we start with the following proposition:
\begin{proposition}
 For the operator $T_k$ defined in (\ref{finite-diff}), the $\ell_1$ norm of $T_k$ is given by
 \begin{equation}\label{Tk-norm1}
  \|T_k \|_1 = 2^k.
 \end{equation}
\end{proposition}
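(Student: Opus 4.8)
The plan is to read $\|T_k\|_1$ as the operator norm induced on $T_k$ by the vector $\ell_1$ norm, i.e. $\|T_k\|_1 = \sup_{f\ne 0}\|T_k f\|_1/\|f\|_1$, and to pin it down by a matching upper and lower bound; the binomial identity $\sum_{m=0}^k {k\choose m}=2^k$ does the real work.

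For the upper bound I would start from the definition (\ref{finite-diff}), apply the triangle inequality inside each $|(T_k f)_j|$, and interchange the summations over $j$ and $m$:
\[
\|T_k f\|_1 = \sum_j \Bigl| \sum_{m=0}^k (-1)^{k+m} {k\choose m} f_{j+m} \Bigr| \le \sum_{m=0}^k {k\choose m}\sum_j |f_{j+m}| = \Bigl(\sum_{m=0}^k {k\choose m}\Bigr)\|f\|_1 = 2^k\|f\|_1 ,
\]
where the third equality uses that, under the periodic extension of $f$ introduced after the Definition, $\sum_j |f_{j+m}| = \|f\|_1$ for each fixed $m$. Hence $\|T_k\|_1 \le 2^k$.

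For the lower bound it suffices to exhibit a single $f$ attaining equality: take $f = e_\ell$, a standard basis vector (at an interior index, or at any index using the periodic extension). Then $(T_k e_\ell)_j$ is nonzero precisely when $\ell - j \in \{0,1,\dots,k\}$, in which case it equals $(-1)^{k+\ell-j}{k\choose \ell-j}$, so $\|T_k e_\ell\|_1 = \sum_{m=0}^k {k\choose m} = 2^k = 2^k\|e_\ell\|_1$. The two bounds together give $\|T_k\|_1 = 2^k$. (Equivalently, one may quote the standard fact that the matrix norm subordinate to the $\ell_1$ norm equals the largest absolute column sum: every interior column of $T_k$ carries exactly the entries $\{(-1)^{k+m}{k\choose m}\}_{m=0}^k$, whose absolute values sum to $2^k$, and boundary columns sum to no more.)

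I do not expect a genuine obstacle here; the only points needing a little care are the reindexing $j\mapsto j+m$ in the upper bound — cleanest with the periodic extension already in force — and the remark that the spike test function $e_\ell$ saturates the triangle inequality, so the two bounds actually coincide rather than merely bracket.
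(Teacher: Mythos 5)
Your proof is correct and ultimately rests on the same computation as the paper's: the absolute values of the entries appearing in each column of $T_k$ are the binomial coefficients ${k\choose m}$, which sum to $2^k$. The paper simply cites the standard fact that the subordinate $\ell_1$ matrix norm is the maximum absolute column sum (plus the circulant structure making all columns equal), whereas you re-derive that fact from first principles via the triangle-inequality upper bound and the spike vector $e_\ell$ for the lower bound --- a more self-contained version of the same argument, as your closing parenthetical already acknowledges.
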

While the exact details of the proof are left to the reader, the general idea is outlined  below.  First recall that the $\ell_1$ norm of a matrix is equivalent to the maximum of the $\ell_1$ norm of the columns, \cite{Golub}. Because the operator is circulant \footnote{In our definition, the operator is only truly circulant if the periodic extension is used.  Nevertheless, the resulting norm is the same in both cases.}, the sum of absolute values of each column of $T_k$ are equivalent, leading to
\begin{equation}\label{Tk-norm}
 ||T_k||_1 = \max_{1 \leq j \leq n} \sum_{i=1}^N \left| (T_k)_{ij}\right| = \sum_{i=0}^k \left| (-1)^{k+i}{k\choose i} \right| = 2^k.
\end{equation}
Thus by definition of the matrix norm (see again, \cite{Golub}) we have
\begin{equation}
\label{eq:T_k2k}
||T_kf||_1 \le 2^k ||f||_1,
\end{equation}
suggesting that the regularization parameter in (\ref{gen-PA}) using $T_k$ be of the form
\begin{equation}
\label{eq:lambdaselection}
\lambda = 2^{k-1} \lambda_1,
\end{equation}
for some appropriate choice of $\lambda_1$ for TV.  We emphasis again here that numerous publications have already been devoted to parameter selection for TV \cite{PS-1,Aujol2006,PS-2}, making a scaling factor for HOTV based off a selection for TV a reasonable approach.  A precise scaling is also useful if one wants to make unbiased comparisons between any two of the approaches.

It is important to note that equality may only be achieved in (\ref{eq:T_k2k}) for signals that are not typically of interest.  For example, ${\bf f} = \{(-1)^j\}_{j = 1}^N$ represents such a signal, but is of limited practical use.  Indeed we are interested in a smaller class of \emph{smooth} signals for which equality may not hold.  Nevertheless, as is shown in what follows, choosing $\lambda$ using (\ref{eq:lambdaselection}) is robust and removes the need for additional parameter tuning for each $k$ used.

The following is our main result, that along with (\ref{Tk-norm1}) also justifies our scaling.

\begin{theorem}\label{main-result}
 Let $f$ be a piecewise constant signal in $\R^N$ with jump locations at the indices over the set defined by $S = \{j~|~ f_{j+1} - f_j \neq 0\}.$  If the jump locations are sufficiently spaced so that for any $~j_1, j_2 \in S$, we have $|j_1 - j_2|>k$, then the following relation between the TV and PA norms holds:
 \begin{equation}
  \|T_k f\|_1 = 2^{k-1} \|T_1 f \|_1.
 \end{equation}

\end{theorem}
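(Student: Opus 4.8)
The plan is to exploit the composition (semigroup) structure of finite differences. Writing $g = T_1 f$, so that $g_j = f_{j+1}-f_j$, I would first record the identity $T_k f = T_{k-1} g$, i.e. that the $k$th order difference is the $(k-1)$th order difference of the first difference. This follows from a one-line manipulation using Pascal's rule ${k\choose m} = {k-1\choose m-1} + {k-1\choose m}$, which rearranges $\sum_{m=0}^k (-1)^{k+m}{k\choose m} f_{j+m}$ into $\sum_{m=0}^{k-1}(-1)^{k-1+m}{k-1\choose m}\bigl(f_{j+m+1}-f_{j+m}\bigr)$. It then suffices to compute $\|T_{k-1} g\|_1$ and compare it with $\|T_1 f\|_1 = \sum_{j\in S}|g_j|$.

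The key structural observation is that $g$ is supported exactly on $S$ (by definition $g_j \neq 0$ if and only if $j \in S$), and the separation hypothesis says this support is ``$k$-separated.'' Since each entry $(T_{k-1}g)_j = \sum_{m=0}^{k-1}(-1)^{k-1+m}{k-1\choose m} g_{j+m}$ only involves the $k$ consecutive values $g_j, \dots, g_{j+k-1}$, and any two indices inside a block of $k$ consecutive integers differ by at most $k-1$, no such block can contain two elements of $S$. Hence for each jump index $j_0 \in S$ the entry $(T_{k-1}g)_j$ ``sees'' $j_0$ only for $j \in \{j_0 - k + 1, \dots, j_0\}$, where it equals $(-1)^{k-1+j_0-j}{k-1\choose j_0-j}\,g_{j_0}$; all remaining entries of $T_{k-1}g$ vanish, and the blocks attached to distinct jumps are disjoint.

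Summing absolute values and grouping each term under the unique jump it sees then gives
\[
\|T_k f\|_1 = \|T_{k-1} g\|_1 = \sum_{j_0 \in S}\ \sum_{i=0}^{k-1} {k-1\choose i}\, |g_{j_0}| = 2^{k-1}\sum_{j_0\in S} |g_{j_0}| = 2^{k-1}\|T_1 f\|_1,
\]
using $\sum_{i=0}^{k-1}{k-1\choose i} = 2^{k-1}$. Indices near the endpoints ($j > N-k$) are absorbed by the periodic extension already fixed in the definition of $T_k$, with the separation hypothesis read cyclically, so they contribute nothing further.

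I expect the only delicate point to be the middle step: confirming that the $k$-separation of $S$ forces every length-$k$ evaluation window of $T_{k-1}$ to meet $S$ in at most one index, so that $\|T_{k-1}g\|_1$ decouples into a sum of independent single-jump contributions, each exactly $2^{k-1}|g_{j_0}|$. The composition identity and the binomial-sum evaluation are routine; it is precisely the separation hypothesis that prevents contributions from neighboring jumps overlapping and thereby cancelling or reinforcing, which is what would otherwise destroy the clean factor $2^{k-1}$.
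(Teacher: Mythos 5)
Your proof is correct, and it takes a genuinely different route from the paper's. The paper first establishes the binomial identity $\binom{k-1}{m}=\sum_{j=0}^m(-1)^{j+m}\binom{k}{j}$ (its Proposition 2), then computes $\|T_kf\|_1$ explicitly for a single-step signal ($f_j=\alpha$ for $j\le i$, $f_j=\beta$ for $j>i$), obtaining $2^{k-1}|\alpha-\beta|$ (its Lemma 1), and finally asserts that the theorem follows by superposing well-separated jumps. You instead factor $T_k=T_{k-1}\circ T_1$ via Pascal's rule, observe that $g=T_1f$ is supported exactly on the $k$-separated set $S$, and use the locality of the length-$k$ stencil of $T_{k-1}$ to decouple $\|T_{k-1}g\|_1$ into disjoint per-jump blocks, each contributing $\sum_{i=0}^{k-1}\binom{k-1}{i}|g_{j_0}|=2^{k-1}|g_{j_0}|$. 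The underlying combinatorics coincide --- your entries $\pm\binom{k-1}{j_0-j}g_{j_0}$ are precisely the values $(\alpha-\beta)(-1)^{k-\ell}\binom{k-1}{\ell}$ the paper extracts from its Proposition 2 --- but your organization buys two things: it treats an arbitrary piecewise constant signal in one pass rather than by an example-plus-``generalizes immediately'' step, and it makes the appearance of $\|T_1f\|_1=\sum_{j\in S}|g_j|$ on the right-hand side structurally transparent, since the whole computation is phrased in terms of $g=T_1f$ from the outset. What the paper's version buys is a self-contained closed-form for the profile of $T_kf$ near an isolated jump, which it reuses rhetorically elsewhere. Both treatments are equally informal about the periodic wrap-around indices $j>N-k$; your remark that the separation hypothesis must be read cyclically there is, if anything, slightly more careful than the paper's ``sufficiently far from the boundary.''
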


Theorem \ref{main-result} is consistent with  (\ref{eq:lambdaselection}) for choosing $\lambda$ in (\ref{gen-PA}), and this scaling is further justified by our numerical results.  Although the statement is for piecewise constant functions, in general the main contribution of to $\|T_k f\|_1$ is at the jumps, even when the function is not a piecewise constant.  We believe a similar argument can be made for higher order piecewise polynomials, but this is notably more difficult and is reserved for potentially later work.

We now prove Theorem \ref{main-result}, which will follow almost immediately by the Lemma 1 below.  We will first need the following proposition:

\begin{proposition}
For all positive integers $k$ and $m$ with $k>m$, we have
\begin{equation}\label{prop1}
 {k-1 \choose m} = \sum_{j=0}^m  {k \choose j} (-1)^{j+m}.
\end{equation}
\end{proposition}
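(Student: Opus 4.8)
The plan is to recognize the claimed identity (\ref{prop1}) as a sign-adjusted form of the standard formula for a partial alternating sum along a row of Pascal's triangle. Multiplying both sides of (\ref{prop1}) by $(-1)^m$, the statement is equivalent to
\[
\sum_{j=0}^m (-1)^j {k\choose j} = (-1)^m {k-1\choose m},
\]
so I would prove this cleaner form and then read off (\ref{prop1}) by multiplying back by $(-1)^m$.

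The only tool needed is Pascal's rule ${k\choose j} = {k-1\choose j} + {k-1\choose j-1}$. Set $a_j := (-1)^j {k-1\choose j}$ for $j \ge 0$ and $a_{-1} := 0$ (consistent with the convention ${k-1\choose -1} = 0$). Then for each $j \ge 0$,
\[
a_j - a_{j-1} = (-1)^j{k-1\choose j} - (-1)^{j-1}{k-1\choose j-1} = (-1)^j\Big({k-1\choose j} + {k-1\choose j-1}\Big) = (-1)^j{k\choose j}.
\]
Summing over $j = 0,1,\dots,m$ telescopes:
\[
\sum_{j=0}^m (-1)^j{k\choose j} = \sum_{j=0}^m (a_j - a_{j-1}) = a_m - a_{-1} = (-1)^m{k-1\choose m},
\]
which is exactly the desired identity.

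An equivalent route is induction on $m$: the base case $m = 0$ is ${k-1\choose 0} = 1 = {k\choose 0}$, and the inductive step peels off the $j = m+1$ term from the sum and applies Pascal's rule once, invoking the induction hypothesis on the remaining partial sum. Either way there is essentially no obstacle here; the only points deserving a moment's care are the boundary term (the $a_{-1} = 0$ convention in the telescoping argument, or the base case in the induction), and the role of the hypothesis $k > m$, which simply keeps us in the regime where ${k-1\choose m}$ is a genuine nonzero binomial coefficient — though in fact both sides vanish when $m \ge k$, so the identity persists there as well.
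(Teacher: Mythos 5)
Your proof is correct and rests on exactly the same ingredient as the paper's: repeated application of Pascal's rule ${k\choose j}={k-1\choose j}+{k-1\choose j-1}$, which the paper unwinds from the left-hand side and you package as a telescoping sum. The two arguments are the same computation organized in opposite directions, with yours written out in full where the paper only sketches.
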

 This identity can be proven with repeated application of the identity 
 $
 {k-1 \choose m} = {k\choose m} - {k-1 \choose m-1}, 
 $
starting with the left hand side until the complete alternating summation arises.  

To prove Theorem 1, we only need to consider a simple example from the class of general piecewise constant functions defined in the theorem.  Let the smooth signal $f$ be defined by
\begin{equation}\label{example-signal}
f_j = 
\begin{cases}
 \alpha &\mbox{if } j\le i\\
 \beta &\mbox{if } j> i.
\end{cases}
\end{equation}

{Here $\alpha \ne \beta$ are constants, and $i$ is some arbitrary index sufficiently far from the boundary.  The following lemma illustrates that $\| T_k f \|_1$ for $f$ in (\ref{example-signal}) increases by a factor of $2$ for each $k$.}
\begin{lemma}
\label{prop:tknorm}
 The discrete signal given by (\ref{example-signal}) satisfies
 \begin{equation}\label{l1-signal}
  \| T_k f \|_1 = 2^{k-1}|\alpha-\beta|.
 \end{equation}

\end{lemma}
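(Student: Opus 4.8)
The plan is to compute $(T_k f)_j$ directly for each index $j$ and exploit the extreme locality of the jump in (\ref{example-signal}). First observe that $(T_k f)_j = \sum_{m=0}^k (-1)^{k+m}{k\choose m} f_{j+m}$ involves only the $k+1$ stencil values $f_j,f_{j+1},\dots,f_{j+k}$. For the signal (\ref{example-signal}) these are either all equal to $\alpha$ (when $j+k\le i$), all equal to $\beta$ (when $j>i$), or a block of $\alpha$'s followed by a block of $\beta$'s (precisely when $i-k<j\le i$). Since $\sum_{m=0}^k(-1)^{k+m}{k\choose m}=(1-1)^k=0$ for $k\ge 1$, the first two cases give $(T_k f)_j=0$. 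Assuming, as the statement implicitly does, that $i$ is far enough from the boundary that none of the straddling stencils wrap around under the periodic extension, I conclude that $T_k f$ is supported on exactly the $k$ indices $j\in\{i-k+1,\dots,i\}$.

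Next I would parametrize those indices by $\ell = i-j\in\{0,1,\dots,k-1\}$. For such $j$ we have $f_{j+m}=\alpha$ exactly when $m\le\ell$ and $f_{j+m}=\beta$ otherwise, so
\[
(T_k f)_j = \alpha\sum_{m=0}^{\ell}(-1)^{k+m}{k\choose m} + \beta\sum_{m=\ell+1}^{k}(-1)^{k+m}{k\choose m}.
\]
Using the vanishing of the full alternating sum once more to replace the second sum by $-\sum_{m=0}^{\ell}(-1)^{k+m}{k\choose m}$, this collapses to
\[
(T_k f)_j = (\alpha-\beta)\sum_{m=0}^{\ell}(-1)^{k+m}{k\choose m} = (-1)^k(\alpha-\beta)\sum_{m=0}^{\ell}(-1)^{m}{k\choose m}.
\]

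Now comes the key step: recognizing the partial alternating binomial sum. Rearranging the identity (\ref{prop1}) of the preceding proposition (valid here since $\ell<k$) gives $\sum_{m=0}^{\ell}(-1)^{m}{k\choose m} = (-1)^{\ell}{k-1\choose \ell}$, whence $|(T_k f)_j| = |\alpha-\beta|\,{k-1\choose \ell}$. Summing the absolute values over the support $\ell=0,1,\dots,k-1$ and invoking $\sum_{\ell=0}^{k-1}{k-1\choose\ell}=2^{k-1}$ yields $\|T_k f\|_1 = 2^{k-1}|\alpha-\beta|$, as claimed. The only genuine obstacle is this middle manipulation — seeing that one should split the stencil at the jump, use $(1-1)^k=0$ both to annihilate the non-straddling terms and to merge the $\alpha$- and $\beta$-blocks into a single factor $\alpha-\beta$, and then identify the leftover truncated sum as one binomial coefficient via (\ref{prop1}); everything else is routine bookkeeping of indices and signs. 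Theorem \ref{main-result} then follows immediately: a piecewise constant signal whose jumps are more than $k$ apart looks, near each jump, like a translate of (\ref{example-signal}), the associated $k$-term blocks in $T_k f$ have pairwise disjoint supports, and so the contributions simply add to give $\|T_k f\|_1 = \sum_{j\in S}2^{k-1}|f_{j+1}-f_j| = 2^{k-1}\|T_1 f\|_1$.
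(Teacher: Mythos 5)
Your proof is correct and follows essentially the same route as the paper's: identify the $k$ stencils straddling the jump as the support of $T_k f$, use the vanishing of the full alternating binomial sum to reduce each nonzero entry to $(\alpha-\beta)$ times a truncated alternating sum, collapse that sum to $\pm\binom{k-1}{\ell}$ via the identity (\ref{prop1}), and sum $\sum_{\ell=0}^{k-1}\binom{k-1}{\ell}=2^{k-1}$. The index bookkeeping and sign handling are all consistent with the paper's argument.
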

\begin{proof}
First observe the evaluation of $(T_k f)_m$ vanishes at points away from the index $i$.  In particular, $(T_k f)_m = 0$ for $m=1,2,\dots , i-k, i+1,i+2,\dots, N-k.$

However, at index $m=i-\ell$, with $0\le \ell <k$, we have
\begin{align*}
(T_k f)_m 
& =  \alpha \left(\sum_{j=0}^{\ell} (-1)^{j+k}{k\choose j}\right)
+ \beta \left( \sum_{j=\ell+1}^k (-1)^{j+k} {k \choose j} \right) \\
& =  (\alpha - \beta) \sum_{j=0}^{\ell} (-1)^{j+k}{k\choose j},
\end{align*}
where we have used the fact that $$\sum_{j=0}^{\ell} (-1)^{j+k}{k\choose j} = - \sum_{j=\ell+1}^k (-1)^{j+k} {k \choose j}.$$  Applying (\ref{prop1})  yields
$
(T_k f)_m = (\alpha - \beta) {k-1 \choose \ell}(-1)^{k-\ell}.
$
Finally, taking the $\ell_1$ norm we obtain
\begin{equation}
 \| T_k f\|_1 = |\alpha - \beta| \sum_{\ell = 0}^{k-1} {k-1 \choose \ell} = 2^{k-1}|\alpha - \beta|.
\end{equation}
\end{proof}

Theorem \ref{main-result} now follows immediately from Lemma \ref{prop:tknorm} by generalizing this discrete signal in (\ref{example-signal}) to a piecewise constant signal with sufficiently spaced jump locations.  We also note that this result generalizes without any trouble to two and three-dimensional signals.

\section{Numerical Experiments}

\subsection{Numerical Determination of Optimal $\lambda$}

 \begin{figure}
 \centering
 \includegraphics[scale=.8]{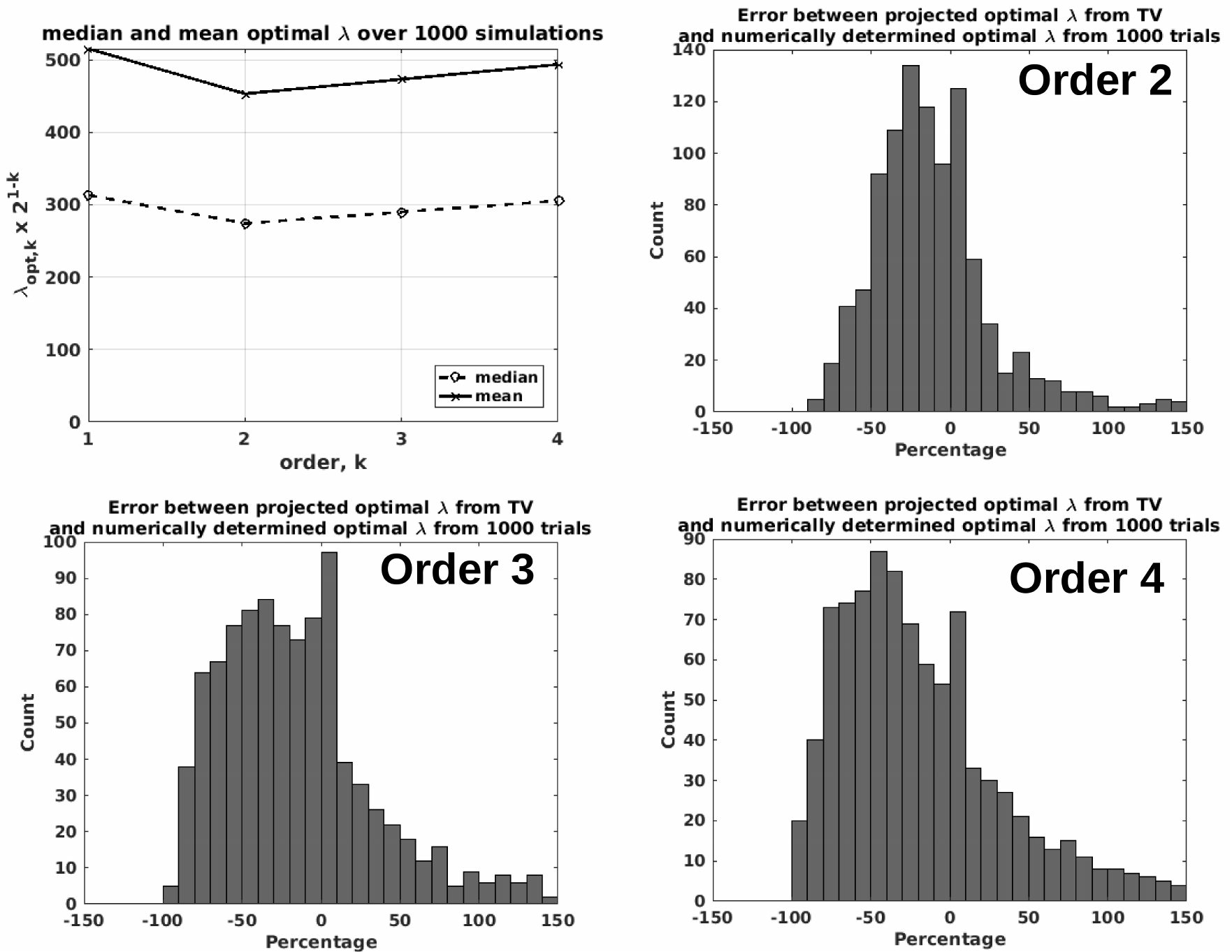}
 \caption{Results for optimal $\lambda$ determined from 1000 simulations.  For each order (2,3,4), the histograms show the errors between the projected optimal $\lambda$ from TV as given by (\ref{eq:lambdaselection}) and the numerically determined optimal $\lambda$ determined by (\ref{opt-lambda}).  Shown in the top left panel are the mean and median optimal $\lambda$ values from all 1,000 simulations for each order (the values are scaled by $2^{1-k}$ for each order $k$).}
 \label{fig3}
\end{figure}
To determine the exactness of our scaling in a robust numerical fashion we set up 1,000 1-D numerical simulations, in which the optimal $\lambda$ is determined for each simulation.  The simulations were each uniquely set up in the following way:  
\begin{itemize}
\item \textbf{Signal construction:}  In each simulation, a piecewise polynomial signal was randomly generated over a 256 point mesh.  Each signal was generated by first randomly designating some number of jumps for the signal, between 2 and 20 jumps with each possible number having equal probability.  Then the location for each jump was randomly determined.  Over each continuous section between the jumps, a polynomial of either order 0, 1, or 2 is generated for the each section, where again this order is randomly chosen and each order has equal probability.  

\item \textbf{Sampling:} A random sampling matrix $A$ was generated to sample the generated signal, where the sampling rate is randomly chosen between 25\% and 100\%.  The sampling rate is defined by the number of samples, or rows of $A$, divided by the number of grid points, or columns of $A$.  The entries of $A$ are determined by randomly choosing 10\% of the entries to be nonzero, where the nonzero entries are uniformly distributed over the interval $[0,1]$.  

\item \textbf{Noise:} Mean zero Gaussian white noise was added to the data samples, where the standard deviation of the noise is set to be a random value between 0 and 3, each value having equal probability of occurring.  
\end{itemize}

This set up gives us an extremely wide variety of examples to draw conclusions and eliminates any possible biases in the particular set up.

Once the problem is set up, the optimal $\lambda$ is determined by a brute force search, which is terminated once the optimal value is determined within a sufficient tolerance.  We choose the optimal $\lambda$ to be defined as
\begin{equation}\label{opt-lambda}
 \lambda_{opt,k} = \arg\min_\lambda \Big\{ || f_{true} - f^*\|_2 ~ \Big| ~ f^* = \arg\min_f \frac{\lambda}{2} \|Af-b\|_2^2 + \|T_kf\|_1\Big\},
\end{equation}
where $f_{true}$ is the original signal.  In other words, the optimal $\lambda$ is that which yields a solution closest to the original function.

According to our work, ideally the optimal lambda for order $k$ is given by $\lambda_{opt,k} = 2^{k-1}\cdot \lambda_{opt,1}$.  Therefore we define the relative error of our approach by 
\begin{equation}\label{lambda-error}
 \frac{2^{1-k}\cdot \lambda_{opt,k} - \lambda_{opt,1}}{\lambda_{opt,1}},
\end{equation}
which is calculated for all 1000 simulations and for $k=2,3,4$.

The results from the simulations are presented in Figure \ref{fig3}.  For each order, the frequency of each relative error (as a percentage) is shown in histogram plot.  Each shows a small offset centered curve to the left of 0\%, while also showing a secondary peak directly at 0\%.  There is a skew to the right however.  This pulls the overall average optimal $\lambda$ to the right of the main peaks.  The mean and median optimal $\lambda$ is plotted in the top left panel, where for each order the values are rescaled by $2^{1-k}$, so that by our scaling of $\lambda$ we would ideally see straight line for these two plots.  There is a modest dip for orders 2 and 3, but not significantly so.  Generally speaking these results confirm that our scaling yields fairly accurate results.  However, the histogram plots show wider curves for higher orders, suggesting that the scaling is more often accurate for smaller orders and less reliable for larger orders.

\subsection{Two-dimensional Imaging Experiments}
For two-dimensional numerical experiments we reconstruct two phantom images of size $256\times256$ from undersampled and noisy data.  The phantoms are shown in Figures \ref{fig1} and \ref{fig2} (a).  The first phantom is the classical Shepp-Logan phantom, which is piecewise continuous and well suited for TV.  The second phantom is similar to a cross-sectional brain image and has many nontrivial image features making it better suited for HOTV methods.

The sampling of the images was at a rate of 50\%, giving $256^2/2$ total data points, which are corrupted with Gaussian white noise yielding an SNR of 23.75.  The sampling matrices are generated just as in the previous 1-D set up.

Reconstructions from the data were performed with HOTV regularization of order 1 (TV), 2, 3, and 4, both with and without the suggested scaling of $\lambda$ based off an initial selection for the TV reconstruction.  Some of the resulting reconstructed images are shown in Figure 1 and 2 (b)-(f).  The least squares solution in (b) is shown for baseline comparison and to illustrate the difficulty of reconstructing in this setting.  For the Shepp-Logan phantom, some of the fine features present in (c-e) with the scaling of $\lambda$ become absent in (f) where the upscaling of $\lambda$ is not used.  Particularly the three small ellipses at the bottom of each become difficult to make out when the scaling is not used.

\begin{figure}[ht!]
 \centering
 \includegraphics[scale=.92]{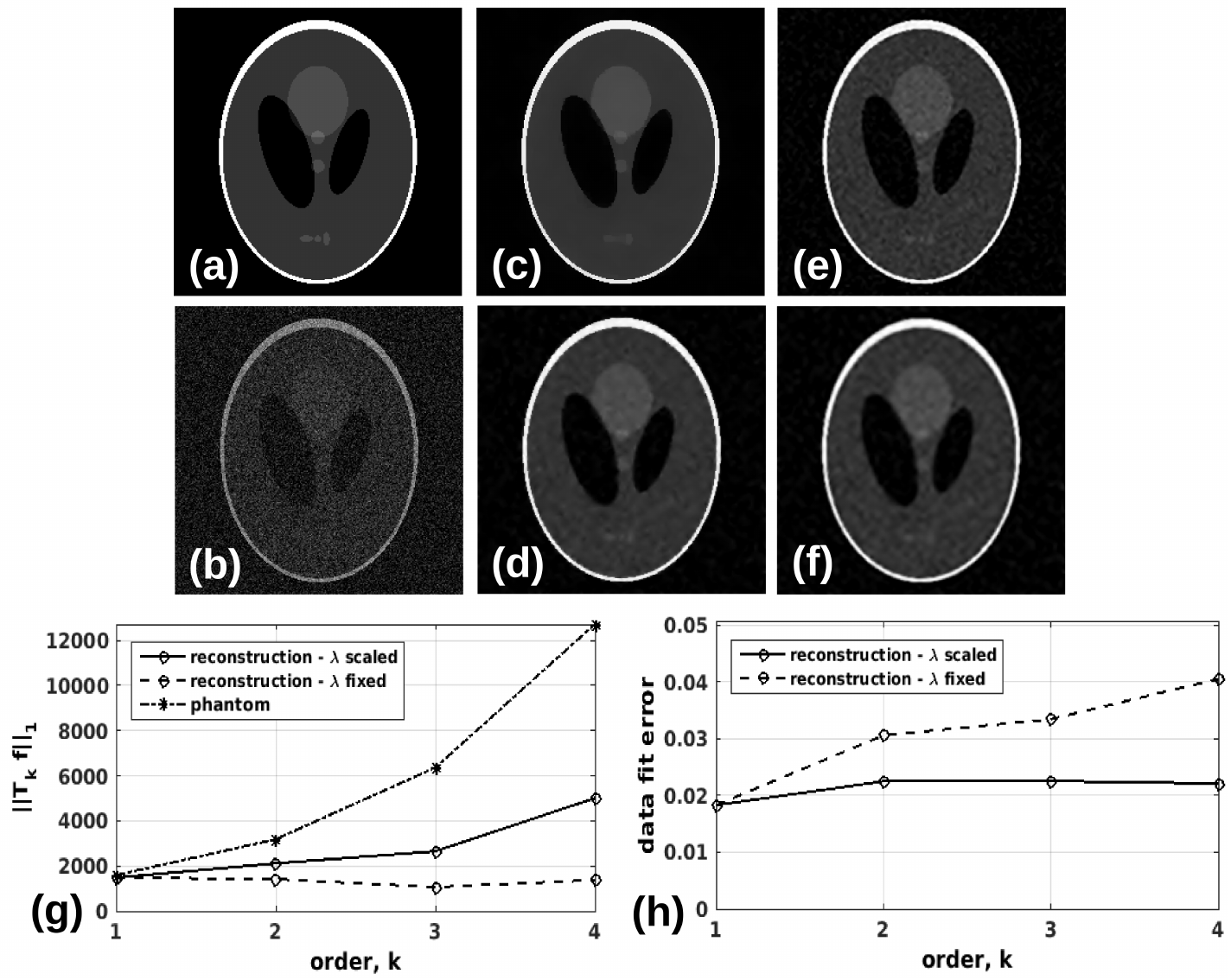}
 \caption{(a) Shepp-Logan phantom.  (b) Least squares reconstruction.  (c) TV reconstruction.  (d-e) HOTV reconstructions of orders 2 and 3 respectively with $\lambda$ scaled according to (\ref{eq:lambdaselection}). (f) HOTV order 3 reconstruction without scaling $\lambda$.  (g) HOTV norms for each reconstructions compared with the phantom.  (h) Relative data fitting error for each reconstruction.}
 \label{fig1}
\end{figure}

For the second phantom, the TV model smooths over the fine details of the image.  A comparison with the order 3 approach with the scaling (e) and without the scaling (f) are again made.  These again indicate that $\lambda$ is too small in (f), and some of the accurate image features that are seen in (e) are smoothed together in (f).

Perhaps most revealing are the plots in Figure 1 (g) and (h).  In (g), the corresponding PA $\,\ell_1$ norms, $\|T_k f\|_1$, for each reconstruction are computed and compared with each corresponding norm for the phantom image.  First note that the norms for the phantom closely follow our theoretical result in Theorem 1 of an increase by a factor of two for each increase in the order.  Without the scaling of $\lambda$, the resulting norms of the reconstructions essentially remain flat, where we already observed the true norm of the phantom increases by a factor of around two.  Thus with the scaling of $\lambda$, the reconstructions follow the trend for the phantom.  In (h), the relative error of the reconstructions is shown, which is defined by $\|Af-b\|_2/\|b\|_2.$  It is clearly shown that for reconstruction without scaling $\lambda$, the relative error increases significantly for each increase in the order.  This indicates that $\lambda$ is too small for these higher order methods.  On the other hand, the reconstructions with the scaled $\lambda$ show comparable relative errors, as one would hope for.

 \begin{figure}
 \centering
 \includegraphics[scale=1]{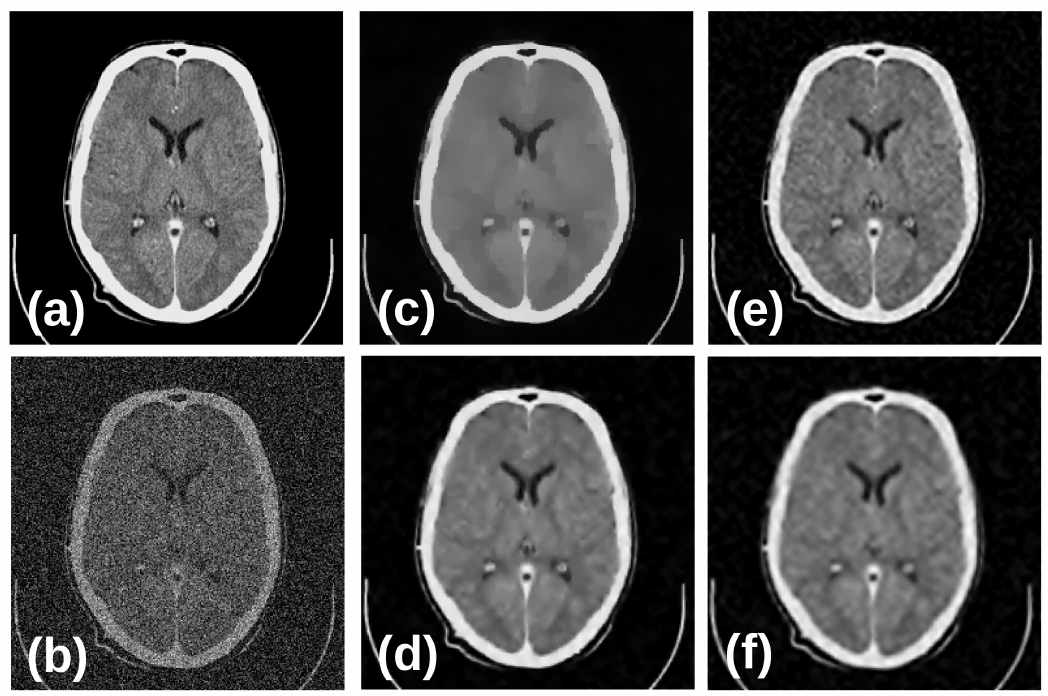}
 \caption{(a) Phantom image.  (b) Least squares reconstruction.  (c) TV reconstruction.  (d-e) HOTV reconstructions of orders 2 and 3 respectively with $\lambda$ scaled according to (\ref{eq:lambdaselection}). (f) HOTV order 3 reconstruction without scaling $\lambda$.}
 \label{fig2}
\end{figure}

\section{Conclusions}
We investigated the common problem of parameter selection for regularized inverse problems, in particular for our successful TV and HOTV methods known as polynomial annihilation (PA).  Our analysis given by Proposition 1 and Theorem 1 showed that for each increase in the order of the method, the parameter $\lambda$ should be increased by a factor of two.  These theoretical results essentially summarize that for a certain class of signals, the $\ell_1$ norm of our HOTV transform of these signals increases by a factor of two for each increase in the order, and thus $\lambda$ should be scaled according to (\ref{eq:lambdaselection}).  Two-dimensional numerical results justified this scaling of $\lambda$, particularly showing that when the scaling was carried out, the data fitting error was nearly even for all orders, and the increase in the $\ell_1$ norms trended with the true phantom norms that agreed with the theoretical results.  More robust numerical analysis was carried out in a set of 1,000 1-D simulations, and showed that the optimal $\lambda$ for each order generally followed the trend of scaling according to (\ref{eq:lambdaselection}) with some modest error.  This scaling allows for unbiased comparisons between the varied approaches and may be especially useful for anyone already familiar with the classical TV approach.

\section*{Acknowledgements}
This work is supported in part by the grants NSF-DMS 1502640 and AFOSR FA9550-15-1-0152.

\end{document}